\documentclass[a4paper,12pt]{article}
\usepackage[top=2.5cm,bottom=2.5cm,left=2.5cm,right=2.5cm]{geometry}
\usepackage{cite,amsmath,amssymb}
\usepackage[margin=1cm,%
font=small,%
format=hang,%
labelsep=period,%
labelfont=bf]{caption}
\usepackage{epsf,epsfig,enumerate, amsfonts,amsgen,amstext,amsbsy,amsopn,amsthm}
\usepackage{ebezier,eepic}
\usepackage{color}
\usepackage{multirow}
\setlength{\textwidth}{150mm} \setlength{\oddsidemargin}{7mm}
\setlength{\evensidemargin}{7mm} \setlength{\topmargin}{-5mm}
\setlength{\textheight}{245mm} \topmargin -18mm

\newtheorem{thm}{Theorem}[section]

\newtheorem{lem}{Lemma}[section]
\newtheorem{false statement}{False statement}

\newtheorem{fact}{Fact}

\theoremstyle{definition}

\newtheorem{prob}{Problem}

\baselineskip 15pt

\newcommand{\bL}{\b{\textit{L}}}
\newcommand{\bN}{\b{\textit{N}}}

\begin{document}
\pagestyle{empty}

\vspace*{2cm}

\begin{center}
 {\LARGE\bf Wiener Index, Harary Index and Hamiltonicity
of Graphs}\\[6mm]

{\bf Hongbo Hua$^a$}, {\bf Bo Ning$^b$}\footnote{Corresponding author. Email:
bo.ning@tju.edu.cn (B. Ning)} \\[6mm]
{\small $^a$Faculty of Mathematics and Physics,
Huaiyin Institute of Technology \\ Huai'an, Jiangsu, 223003, China\\[4mm]
$^b$Center for Applied Mathematics,
Tianjin University\\ Tianjin, 300072, P. R. China.} \\[6mm]

(Received  September 18, 2016)\\[8mm]
\end{center}

\begin{abstract}
In this paper, we prove tight sufficient conditions for traceability
and Hamiltonicity of connected graphs with given minimum degree, in
terms of Wiener index and Harary index. We also prove some result on
Hamiltonicity of balanced bipartite graphs in the similar fashion.
In two recent papers \cite{LDJ2016,LDJ2017}, Liu et al.
corrected some previous work on traceability of connected graphs in
terms of Wiener index and Harary index, respectively, such as
\cite{HW2013,Y2013}. We generalize these results and give short
and unified proofs. All results in this paper are best possible.
\end{abstract}

 \normalsize
\baselineskip=0.30in

\section{Introduction}
Let $G$ be a graph. For two vertices $u,v$ of $G$, the \emph{distance}
between $u$ and $v$ in $G$, denoted by $d_G(u,v)$, is the length of a
shortest path from $u$ to $v$ in $G$. We denote by $diam(G)$ the diameter
of $G$, and denote by $\delta(G)$ the minimum degree of $G$. For two
graphs $G$ and $H$, we denote the \emph{union} of $G$ and $H$ by $G+H$,
and the \emph{join} of $G$ and $H$ by $G\vee H$. A graph is called
\emph{Hamiltonian} (\emph{traceable}) if there is a cycle (path) including
all vertices in it. A bipartite graph is called \emph{balanced} if its
each partition set has the same number of vertices. For terminology and
notation not defined here, we refer the reader to West \cite{W1996}.

Our main purpose of this paper is to give tight sufficient conditions for
Hamiltonicity and traceability of connected graphs and of connected balanced
bipartite graphs with given minimum degree, in terms of Wiener index and
Harary index, respectively. Furthermore, our work not only gives short
and unified proofs of previous work due to Hua and Wang \cite{HW2013},
and Yang \cite{Y2013}, but also generalizes all these theorems. Our main
tools come from Ning and Ge \cite{NG2015}, and Li and Ning \cite{LN2016},
respectively.

Recall that the \emph{Wiener index} of a connected graph $G$, denote by
$W(G)$, is defined to be the sum of distances between every pair of vertices
in $G$. That is,
$$
W(G)=\sum_{\{u,v\}\subseteq V(G)}d_G(u,v).
$$
The Harary index is also a useful topological index in chemical graph theory
and has received much attention during the past decades. This index has been
introduced in 1993 by Plav\v{s}i\'{c} et al. \cite{PNTM1993}
and by Ivanciue et al. \cite{IBB1993}, independently. For a connected graph $G$, the
\emph{Harary index} of $G$, denoted by $H(G)$, is defined as
$$
H(G)=\sum_{\{u,v\}\subseteq V(G)}\frac{1}{d_G(u,v)}.
$$
These two indices have found many applications in chemistry and there are
lots of papers dealing with these two indices, see surveys \cite{LL2013,XDT2015}.

In this paper, we mainly consider the following two problems related to
Wiener index, Harary index and Hamiltonian properties of graphs, which are
motivated by the main problems studied in \cite{LN2016}. The main results
in this paper are solutions to the following two problems.

\begin{prob}\label{Prob:1.1}
Among all non-Hamiltonian graphs (non-traceable graphs) $G$ of order
$n$ with $\delta(G)\geq k$, determine the values of $\min W(G)$
and $\max H(G)$, respectively.
\end{prob}

\begin{prob}\label{Prob:1.2}
Among all non-Hamiltonian balanced bipartite graphs $G$ of order
$2n$ with $\delta(G)\geq k$, determine the values of $\min W(G)$
and $\max H(G)$, respectively.
\end{prob}

We organize this paper as follows. In Section 2, we give some notes
on an old theorem about Hamilton cycles due to Erd\H{o}s and its
generalizations. As shown by Liu et al. \cite{LDJ2016,LDJ2017},
there are some errors in proofs of some previous work on traceability
of connected graphs, in terms of Wiener index and Harary index. We
remark that these theorems can be unified in a short proof (by using
the generalizations of Erd\H{o}s' theorem). In Section 3, we prove
the correct form and also prove a similar result on Hamilton cycles
in connected graphs. In Sections 4 and 5, by imposing the minimum
degree of graphs, we generalize the above results to connected graphs
and to connected balanced bipartite graphs, respectively. In the
last section, we give some ideas on traceability of connected
balanced and nearly-balanced bipartite graphs with given
minimum degree, still in terms of these two indices.

\section{Erd\H{o}s' theorem on Hamilton cycles, its refinements and some notes}
To find tight edge conditions for Hamilton cycles in graphs is a standard
topic in graph theory. In 1962, Erd\H{o}s \cite{E1962} proved the following
theorem, which generalized Ore's theorem \cite{O1961} by introducing the minimum
degree of a graph as a new parameter.

\begin{thm}[Main Theorem  in \cite{E1962}]
Let $G$ be a graph of order $n$. If $\delta(G)\geq k$, where $1\leq k\leq(n-1)/2$, and
$$
e(G)>\max\left\{\binom{n-k}{2}+k^2,\binom{\lceil(n+1)/2\rceil}{2}+\left\lfloor\frac{n-1}{2}\right\rfloor^2\right\},
$$
then $G$ is Hamiltonian.
\end{thm}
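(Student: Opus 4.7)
The natural plan is to argue by contradiction, using Chv\'atal's degree-sequence theorem as a black box. Assume $G$ has $\delta(G)\geq k$ and is not Hamiltonian, and let its degree sequence be $d_1\leq d_2\leq\cdots\leq d_n$. By the contrapositive of Chv\'atal's theorem there exists an integer $i$ with $1\leq i<n/2$ satisfying $d_i\leq i$ and $d_{n-i}\leq n-i-1$. The hypothesis $\delta(G)\geq k$ immediately forces $k\leq d_i\leq i$, so the relevant index runs over the integer interval $k\leq i\leq\lfloor(n-1)/2\rfloor$.

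Next I would bound $e(G)$ by chopping the sorted degree sequence into three blocks. The smallest $i$ terms are each at most $i$, the middle $n-2i$ terms are each at most $n-i-1$, and the largest $i$ terms are trivially at most $n-1$. Summing these bounds yields
$$2e(G)\leq i^{2}+(n-2i)(n-i-1)+i(n-1)=2\binom{n-i}{2}+2i^{2},$$
so $e(G)\leq f(i)$, where $f(i):=\binom{n-i}{2}+i^{2}$.

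A one-line forward-difference computation gives $f(i+1)-f(i)=3i-n+2$, which is strictly increasing in $i$; hence $f$ is convex on the integers and its maximum over $[k,\lfloor(n-1)/2\rfloor]$ is attained at one of the two endpoints. Evaluating $f$ at $i=k$ produces $\binom{n-k}{2}+k^{2}$, while evaluating at $i=\lfloor(n-1)/2\rfloor$ (after a short parity check identifying $n-i$ with $\lceil(n+1)/2\rceil$ in both parities of $n$) produces $\binom{\lceil(n+1)/2\rceil}{2}+\lfloor(n-1)/2\rfloor^{2}$. These are precisely the entries of the maximum appearing in the hypothesis, so the chain $e(G)\leq f(i)\leq\max\{\cdot,\cdot\}$ contradicts the assumed strict inequality on $e(G)$.

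The expected obstacle is really just bookkeeping: verifying convexity of $f$, checking that $i=\lfloor(n-1)/2\rfloor$ is indeed reachable under $i<n/2$, and matching the floor/ceiling expressions for both parities of $n$. The only substantive tool is Chv\'atal's theorem; if one wished to avoid it, one would replay its short proof via Bondy--Chv\'atal closure, and the three-block degree estimate above would still be the heart of the argument.
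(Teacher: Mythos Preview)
The paper does not supply its own proof of this theorem: it is quoted as Erd\H{o}s' 1962 result and used only as background/motivation for the later refinements (Lemmas~\ref{Lem:4.1}--\ref{Lem:4.2}). So there is nothing in the paper to compare against.

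That said, your argument is correct. The contrapositive of Chv\'atal's theorem gives an index $i$ with $k\le i\le\lfloor(n-1)/2\rfloor$, the three-block estimate yields $2e(G)\le i^{2}+(n-2i)(n-i-1)+i(n-1)=2f(i)$, and the convexity of $f$ via the forward difference $f(i+1)-f(i)=3i-n+2$ pushes the maximum to an endpoint; the identification $n-\lfloor(n-1)/2\rfloor=\lceil(n+1)/2\rceil$ is routine. One historical caveat worth flagging in a write-up: Chv\'atal's theorem (1972) postdates Erd\H{o}s' paper, so this cannot be \emph{his} proof; the original route goes through P\'osa's degree-sequence condition, which yields the same index $i$ and the same three-block bound. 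Your closing remark about replaying the closure argument is the right instinct, though even Bondy--Chv\'atal closure is anachronistic here; P\'osa's 1962 condition is the period-appropriate black box if you want to match Erd\H{o}s.
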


The original Erd\H{o}s' theorem has the following concise form, which is
listed as an exercise in West \cite{W1996}.

\begin{thm}[Exercise 7.2.28 in \cite{W1996}]
Let $G$ be a graph of order $n\geq 6k$ with $\delta(G)\geq k\geq 1$. If
$$e(G)>\binom{n-k}{2}+k^2$$ then $G$ is Hamiltonian.
\end{thm}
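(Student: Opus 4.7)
The plan is to deduce this concise form directly from the Main Theorem of Erd\H{o}s cited immediately above. That theorem gives Hamiltonicity under the stronger bound
\[
e(G)>\max\left\{\binom{n-k}{2}+k^{2},\;\binom{\lceil(n+1)/2\rceil}{2}+\left\lfloor\frac{n-1}{2}\right\rfloor^{2}\right\},
\]
so it suffices to show that under the extra hypothesis $n\geq 6k$ with $k\geq 1$ the first term of this maximum is at least the second. Once this is established, the assumed bound $e(G)>\binom{n-k}{2}+k^{2}$ already exceeds the full max, and Erd\H{o}s' Main Theorem immediately delivers a Hamilton cycle.

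The required inequality is a short, purely algebraic comparison, handled by splitting on the parity of $n$. For $n=2m$ the hypothesis becomes $m\geq 3k$, and after expansion the difference of the two quantities factors as
\[
\binom{n-k}{2}+k^{2}-\binom{m+1}{2}-(m-1)^{2}=\tfrac{1}{2}\bigl((m-k)(m-3k)+m+k-2\bigr),
\]
which is strictly positive for $k\geq 1$. For $n=2m+1$, the hypothesis $n\geq 6k$ (together with the fact that $6k$ is even) again forces $m\geq 3k$, and the analogous difference factors even more cleanly as
\[
\binom{n-k}{2}+k^{2}-\binom{m+1}{2}-m^{2}=\tfrac{1}{2}(m-k)(m-3k+1),
\]
which is at least $k$ under the constraint. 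Thus in either parity class the first term of the max dominates, and Erd\H{o}s' Main Theorem finishes the argument.

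There is no combinatorial obstacle beyond invoking the Main Theorem; the only thing to watch is executing the two factorisations correctly (the coefficients are slightly different in the two parity cases, a natural place to make a sign error). It is worth noting in passing that the threshold $n\geq 6k$ is essentially sharp for this reduction, since at $n=6k-1$ the factor $(m-3k+1)$ becomes zero and the two expressions in the max coincide; below that range the second term in fact dominates, and the concise form can no longer be derived from the Main Theorem in this way.
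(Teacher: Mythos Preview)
Your argument is correct. Both factorisations check out: for $n=2m$ the doubled difference is $m^{2}-4mk+3k^{2}+m+k-2=(m-k)(m-3k)+m+k-2$, positive once $m\geq 3k$ and $k\geq 1$; for $n=2m+1$ it is $m^{2}-4mk+3k^{2}+m-k=(m-k)(m-3k+1)$, and the parity observation that $6k$ is even forces $m\geq 3k$, making this at least $2k$. You should also note explicitly that $n\geq 6k$ guarantees $k\leq (n-1)/2$, so that the hypotheses of Erd\H{o}s' Main Theorem are indeed met; this is immediate but worth stating.

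As for comparison with the paper: there is nothing to compare. The paper does not prove this statement; it merely quotes it as a known concise reformulation of Erd\H{o}s' theorem (citing it as Exercise~7.2.28 in West). Your derivation from the Main Theorem is exactly the intended route for that exercise and is the natural way to obtain the result. One minor quibble: your closing remark that ``below that range the second term in fact dominates'' is not quite accurate at $n=6k-2$ (where the even-case expression is still $2k-2\geq 0$), but this side comment does not affect the proof itself.
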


When $k=1$, a refinement of Erd\H{o}s' theorem can date back to Ore \cite{O1961},
and was also given by Bondy \cite{B1972}. (See also Exercise~28 on Page 126 of Bollob\'as' book 
\cite{B1978}). When $k=2$, Ning and Ge \cite{NG2015}
further proved the following refined theorem.

\begin{lem}[Lemma 2 in \cite{NG2015}]\label{Lem:2.3}
Let $G$ be a graph on $n\geq 5$ vertices and $m$ edges with $\delta\geq 2$.
If $m\geq\binom{n-2}{2}+4$, then $G$ is Hamiltonian unless $G\in
\mathcal{\mathcal{G}}_1=\{K_2\vee (K_{n-4}+2K_1),K_3\vee 4K_1,K_2\vee(K_{1,3}+K_1),
K_1\vee K_{2,4},K_3\vee (K_2+3K_1),K_4\vee 5K_1,K_3\vee(K_{1,4}+K_1),K_2\vee K_{2,5},
K_5\vee 6K_1\}$.
\end{lem}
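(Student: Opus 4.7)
The plan is to invoke Chv\'atal's classical degree-sequence criterion for Hamiltonicity and then use the edge-count hypothesis to pin the ``bad index'' together with $n$ into a very small finite list, after which each case is settled by a short direct argument. Assume for contradiction that $G$ is non-Hamiltonian with $\delta(G)\ge 2$ and $m\ge \binom{n-2}{2}+4$, and order the degrees $d_1\le d_2\le\cdots\le d_n$. Chv\'atal's theorem supplies an index $i$ with $1\le i<n/2$, $d_i\le i$ and $d_{n-i}\le n-i-1$; since $d_1\ge\delta\ge 2$, this index satisfies $i\ge 2$.

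The first main step is to trap $(i,n)$. Splitting $\sum d_j$ into the three blocks $\{j\le i\}$, $\{i<j\le n-i\}$, $\{j>n-i\}$ and using the Chv\'atal bounds gives $2m\le i^2+(n-2i)(n-i-1)+i(n-1)$. Combining with $2m\ge n^2-5n+14$ rearranges, cleanly, into the single inequality
\[
2n(i-2)\;\le\;3i^2+i-14.
\]
For $i=2$ this is a tautology ($0\le 0$), placing no restriction on $n$; for $i\ge 3$ it forces $(i,n)\in\{(3,7),(3,8),(4,9),(5,11)\}$; and for $i\ge 6$ it has no solution at all, since $2n(i-2)\ge 2(2i+1)(i-2)>3i^2+i-14$ as soon as $n\ge 2i+1$ (the gap being $i^2-7i+10>0$ for $i\ge 6$).

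It remains to resolve the two regimes. When $i=2$, equality must hold in every block bound, which forces the degree sequence to be exactly $2,2,n-3,\ldots,n-3,n-1,n-1$. A short structural argument --- the two degree-$(n-1)$ vertices are universal, the two degree-$2$ vertices are then adjacent only to these, and the remaining $n-4$ vertices of degree $n-3$ must span a clique --- identifies $G$ uniquely as $K_2\vee(K_{n-4}+2K_1)$, which is non-Hamiltonian since removing the two universal vertices leaves three components. For each of the four sporadic pairs $(i,n)$, one enumerates the short list of degree sequences compatible with $\delta\ge 2$, the Chv\'atal bounds, and the edge-count lower bound; for each sequence one lists the non-isomorphic realisations and certifies non-Hamiltonicity by inspection or by a toughness check. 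This recovers the remaining eight members of $\mathcal{G}_1$. The principal obstacle is precisely this last enumeration: although entirely finite, it must be carried out exhaustively so that no extremal graph is overlooked, and explicitly so that each claimed non-Hamiltonian realisation is verified rather than merely plausible.
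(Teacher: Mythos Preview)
The paper does not contain a proof of this lemma: it is quoted verbatim from Ning and Ge~\cite{NG2015} and used as a black box, so there is no ``paper's own proof'' against which to compare. What can be said is that your outline is the standard route---and essentially the one Ning and Ge take---namely, to apply Chv\'atal's degree-sequence criterion, use the edge bound to trap the failing index, and then resolve a finite residue by hand.

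Your arithmetic is correct: the block estimate $2m\le i^2+(n-2i)(n-i-1)+i(n-1)=3i^2+n^2-n-2in+i$ combined with $2m\ge n^2-5n+14$ does give $2n(i-2)\le 3i^2+i-14$, and your case split $(i,n)\in\{(3,7),(3,8),(4,9),(5,11)\}$ for $i\ge 3$ is right, as is the $i=2$ rigidity argument forcing $G=K_2\vee(K_{n-4}+2K_1)$. One small point worth tightening: you should explicitly choose $i$ to be the \emph{smallest} failing Chv\'atal index (or, equivalently, first test whether $i=2$ fails), so that the sporadic enumeration for $i\ge 3$ need not re-discover the infinite family when both $i=2$ and some larger index fail simultaneously. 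Beyond that, the only real gap is the one you flag yourself: the exhaustive enumeration at $n\in\{7,8,9,11\}$ is asserted rather than carried out, and the lemma is not proved until each of the eight sporadic graphs has been produced and every other candidate degree sequence (or realisation) has been shown either to be Hamiltonian or to fall short of the edge bound.
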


As a corollary, Ning and Ge \cite{NG2015} also proved the following theorem
on traceability of connected graphs.

\begin{lem}[Lemma 4 in \cite{NG2015}]\label{Lem:2.4}
Let $G$ be a graph on $n\geq 4$ vertices and $m$ edges with $\delta \geq 1$.
If $m\geq \binom{n-2}{2}+2$, then $G$ is traceable unless $G\in \mathcal{G}_2=
\{K_1\vee(K_{n-3}+2K_1),K_2\vee4K_1,K_1\vee(K_{1,3}+K_1),K_{2,4},
K_2\vee(3K_1+K_2),K_3\vee 5K_1,K_2\vee(K_{1,4}+K_1),K_1\vee K_{2,5},
K_4\vee 6K_1\}$.
\end{lem}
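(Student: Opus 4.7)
The plan is to derive Lemma~\ref{Lem:2.4} as a direct corollary of Lemma~\ref{Lem:2.3} via the classical reduction from traceability to Hamiltonicity. Let $v$ be a new vertex and set $G' = G \vee K_1$, where the $K_1$ consists of $v$. Then $G$ is traceable if and only if $G'$ is Hamiltonian: every Hamilton path $u_1 u_2 \cdots u_n$ of $G$ extends to a Hamilton cycle $v u_1 u_2 \cdots u_n v$ of $G'$, and conversely $v$ lies on every Hamilton cycle of $G'$, so its two neighbors on that cycle are the endpoints of a Hamilton path of $G$.

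I would then verify the hypothesis of Lemma~\ref{Lem:2.3} for $G'$. One has $|V(G')| = n+1 \geq 5$, $\delta(G') = \delta(G) + 1 \geq 2$, and $|E(G')| = m + n$. Using the identity $\binom{n-1}{2} - \binom{n-2}{2} = n-2$, the assumption $m \geq \binom{n-2}{2} + 2$ translates to $m + n \geq \binom{n-1}{2} + 4 = \binom{(n+1)-2}{2} + 4$, which is exactly the edge condition needed. Lemma~\ref{Lem:2.3} then gives that either $G'$ is Hamiltonian (equivalently, $G$ is traceable, and we are done) or $G' \in \mathcal{G}_1$.

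In the second case, I would identify $G = G'-v$ for each exception in $\mathcal{G}_1$. The useful feature is that every member of $\mathcal{G}_1$ has the form $K_s \vee H$ with $s \geq 1$, so $v$, being universal in $G'$, can be taken inside the $K_s$ factor; deletion then gives $G = K_{s-1} \vee H$. Shifting the order parameter of Lemma~\ref{Lem:2.3} from $n+1$ down to $n$ and reading off the nine cases term-by-term yields exactly the list $\mathcal{G}_2$: for instance $K_2 \vee (K_{n-3}+2K_1) \mapsto K_1 \vee (K_{n-3}+2K_1)$, $K_1 \vee K_{2,4} \mapsto K_{2,4}$, $K_3 \vee (K_2+3K_1) \mapsto K_2 \vee (K_2+3K_1)$, and $K_5 \vee 6K_1 \mapsto K_4 \vee 6K_1$.

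The main (and only) obstacle is this bookkeeping: checking that each of the nine graphs of $\mathcal{G}_1$ collapses under deletion of a universal vertex to the correct entry of $\mathcal{G}_2$, and that the resulting graph satisfies $\delta(G) \geq 1$ (which is immediate since each such $G$ still has a universal vertex). As a tightness sanity check one also verifies that each graph in $\mathcal{G}_2$ has exactly $\binom{n-2}{2}+2$ edges and is non-traceable, which is transparent because the independent part is too large relative to the apex clique to be threaded by any Hamilton path.
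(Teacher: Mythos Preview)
The paper does not give its own proof of this lemma: it is quoted verbatim from \cite{NG2015} and introduced with the words ``As a corollary, Ning and Ge \cite{NG2015} also proved the following theorem,'' referring back to Lemma~\ref{Lem:2.3}. Your reduction---form $G' = G\vee K_1$ and apply Lemma~\ref{Lem:2.3} to $G'$---is precisely the ``corollary'' argument the paper alludes to, and your arithmetic and case-by-case deletion of a universal vertex from each member of $\mathcal{G}_1$ are correct.

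One small slip in the final sanity-check paragraph: you assert that every graph in $\mathcal{G}_2$ ``still has a universal vertex,'' but $K_{2,4}$ does not (its vertices have degrees $4$ and $2$ in a $6$-vertex graph, and the two degree-$4$ vertices are nonadjacent). This does not affect the proof, since all you actually need is $\delta\geq 1$, which holds for $K_{2,4}$ anyway; but the parenthetical justification should be adjusted.
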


Here, we would like to comment on some previous work on Wiener index, Harary
index and traceability of connected graphs. Hua and Wang \cite{HW2013} gave
a sufficient condition for traceability of connected graphs in terms of Harary
index. While in \cite{Y2013}, Yang gave a similar sufficient condition for
traceability of connected graphs in terms of Wiener index. However, as shown
by Liu et al., there are some errors in all the proofs. In two papers \cite{LDJ2016,LDJ2017},
Liu et al. have corrected the proof of Hua and Wang's result and Yang's result,
respectively. We point out that the proofs of Hua-Wang's result and Yang's result
can be unified by using Lemma~\ref{Lem:2.4} (together with some facts). Furthermore,
we will give a short and unified proof in the next section. All results in this
paper are given in the similar fashion.

\section{Corrected and unified forms of Hua-Wang's theorem and Yang's
 theorem}
In this section, we first prove a result on Hamiltonicity of connected graphs
with $\delta(G)\geq 2$, in terms of Wiener index and Harary index.

In order to state our results, we introduce some notation in \cite{LN2016}.
We define: for $1\leq k\leq(n-1)/2$, $L^k_n=K_1\vee(K_k+K_{n-k-1})$ and
$N^k_n=K_k\vee(K_{n-2k}+kK_1).$ Note that $L^1_{n}=N^1_{n}$. We denote by
$\bL_n^k$ and $\bN_n^k$ the graphs obtained from $L_{n+1}^{k+1}$ and
$N_{n+1}^{k+1}$, respectively, by deleting one vertex of degree $n$, i.e.,
for $0\leq k\leq n/2-1$,
$$\bL^k_n=K_{k+1}+K_{n-k-1} \mbox{ and } \bN^k_n=K_k\vee(K_{n-2k-1}+(k+1)K_1).$$

The next fact is useful. Since its proof is simple, we omit the proof.

\begin{fact}\label{Fact:3.1}
Let $G$ be a connected graph on $n$ vertices. Then there holds:
\begin{enumerate}[$(i)$]
\item $W(G)+e(G)\geq n(n-1)$, where the equality holds if and only if $diam(G)\leq 2$;

\item $e(G)\geq 2H(G)-\binom{n}{2}$, where the equality holds if and only if $diam(G)\leq 2$.
\end{enumerate}
\end{fact}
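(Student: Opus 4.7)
The plan is to split both sums that define $W(G)$ and $H(G)$ according to whether a pair $\{u,v\}$ is an edge or a non-edge of $G$, and then to exploit the trivial bound $d_G(u,v)\geq 2$ whenever $uv\notin E(G)$. There are $\binom{n}{2}$ pairs in total, of which $e(G)$ contribute distance exactly $1$, and the remaining $\binom{n}{2}-e(G)$ pairs contribute distance at least $2$.

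For part $(i)$, I would write
\[
W(G)=\sum_{uv\in E(G)}1+\sum_{\{u,v\}\subseteq V(G),\,uv\notin E(G)}d_G(u,v)\geq e(G)+2\Bigl(\binom{n}{2}-e(G)\Bigr),
\]
which rearranges to $W(G)+e(G)\geq 2\binom{n}{2}=n(n-1)$. Equality holds precisely when every non-adjacent pair contributes exactly $2$, i.e., $d_G(u,v)=2$ for all $uv\notin E(G)$, which is equivalent to $\mathrm{diam}(G)\leq 2$ (the diameter of a connected graph on at least two vertices is at least $1$, and is $\leq 2$ iff all distances are $1$ or $2$).

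For part $(ii)$, the same decomposition with the reciprocal bound $1/d_G(u,v)\leq 1/2$ for non-adjacent pairs gives
\[
H(G)=\sum_{uv\in E(G)}1+\sum_{\{u,v\}\subseteq V(G),\,uv\notin E(G)}\frac{1}{d_G(u,v)}\leq e(G)+\frac{1}{2}\Bigl(\binom{n}{2}-e(G)\Bigr),
\]
which rearranges to $2H(G)\leq e(G)+\binom{n}{2}$, i.e., $e(G)\geq 2H(G)-\binom{n}{2}$. Again equality occurs exactly when $1/d_G(u,v)=1/2$ for every non-edge pair, i.e., when $\mathrm{diam}(G)\leq 2$.

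There is no real obstacle here: both statements are essentially the same one-line accounting, and the equality analysis is transparent because in both inequalities the slack is a nonnegative sum over non-adjacent pairs $\{u,v\}$ of a quantity that vanishes exactly when $d_G(u,v)=2$. The only thing worth noting is that the hypothesis that $G$ is connected ensures all pairwise distances are finite, so the sums defining $W(G)$ and $H(G)$ are well-defined.
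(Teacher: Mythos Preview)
Your argument is correct and is exactly the standard one-line computation one would expect; the paper in fact omits the proof, remarking only that it is simple. There is nothing to add.
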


\begin{thm}\label{Thm:3.2}
Let $G$ be a connected graph of order $n\geq 5$, where $\delta(G)\geq 2$. If
$W(G)\leq W(N^2_n)$ or $H(G)\geq H(N^2_n)$, then $G$ is Hamiltonian unless
$G\in \mathcal{\mathcal{G}}_1$.
\end{thm}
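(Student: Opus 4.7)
The plan is to reduce both the Wiener-index and Harary-index hypotheses to a single edge-count condition of the form $e(G) \geq \binom{n-2}{2}+4$, and then invoke Lemma~\ref{Lem:2.3} directly. Fact~\ref{Fact:3.1} is the bridge: it tells us that $e(G) \geq n(n-1) - W(G)$ and $e(G) \geq 2H(G) - \binom{n}{2}$ for any connected $G$, with equality exactly when $\mathrm{diam}(G) \leq 2$. So if we can verify that the extremal graph $N^2_n = K_2 \vee (K_{n-4} + 2K_1)$ has diameter $2$ and satisfies $e(N^2_n) = \binom{n-2}{2}+4$, the two hypotheses will translate verbatim to the hypothesis of Lemma~\ref{Lem:2.3}.

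The first step is the arithmetic check. The graph $N^2_n$ has a $K_2$ of universal vertices, so trivially $\mathrm{diam}(N^2_n) \leq 2$ (and it is exactly $2$ for $n \geq 5$). Counting edges: one edge inside the $K_2$, $\binom{n-4}{2}$ inside the $K_{n-4}$, and $2(n-2)$ edges joining $K_2$ to the rest, giving
$$
e(N^2_n) \;=\; 1 + \binom{n-4}{2} + 2(n-2) \;=\; \binom{n-2}{2}+4,
$$
as a routine simplification shows. Because $\mathrm{diam}(N^2_n) = 2$, Fact~\ref{Fact:3.1} gives the tight identities $W(N^2_n) = n(n-1) - e(N^2_n)$ and $2H(N^2_n) = e(N^2_n) + \binom{n}{2}$.

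Now the two cases collapse immediately. If $W(G) \leq W(N^2_n)$, then by Fact~\ref{Fact:3.1}(i),
$$
e(G) \;\geq\; n(n-1) - W(G) \;\geq\; n(n-1) - W(N^2_n) \;=\; e(N^2_n) \;=\; \binom{n-2}{2}+4.
$$
If instead $H(G) \geq H(N^2_n)$, then by Fact~\ref{Fact:3.1}(ii),
$$
e(G) \;\geq\; 2H(G) - \binom{n}{2} \;\geq\; 2H(N^2_n) - \binom{n}{2} \;=\; e(N^2_n) \;=\; \binom{n-2}{2}+4.
$$
In either case Lemma~\ref{Lem:2.3} applies (recall $\delta(G) \geq 2$ and $n \geq 5$), so $G$ is Hamiltonian or $G \in \mathcal{G}_1$, which is exactly the stated conclusion.

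There is essentially no obstacle here; the only nontrivial point is getting the correct value $e(N^2_n) = \binom{n-2}{2}+4$ and remembering that the diameter-$2$ condition is what guarantees Fact~\ref{Fact:3.1} is tight at $N^2_n$ (so that no slack is wasted in the chain of inequalities). This is also why $N^2_n$ is the natural extremal graph: it achieves the Lemma~\ref{Lem:2.3} threshold with equality \emph{and} has diameter $2$, so it simultaneously minimizes Wiener and maximizes Harary among graphs with $\binom{n-2}{2}+4$ edges of that structure.
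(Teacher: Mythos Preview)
Your proof is correct and follows essentially the same approach as the paper: use $\mathrm{diam}(N^2_n)=2$ together with Fact~\ref{Fact:3.1} to convert either index hypothesis into $e(G)\geq\binom{n-2}{2}+4$, then invoke Lemma~\ref{Lem:2.3}. The paper additionally checks that every $G'\in\mathcal{G}_1$ has diameter $2$ and hence attains $W(G')=W(N^2_n)$ and $H(G')=H(N^2_n)$, confirming the exception list is tight; this is not needed for the theorem as stated, but you may wish to include it for completeness.
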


\begin{proof} Since $diam(N^2_n)=2$, by Fact \ref{Fact:3.1}, we obtain that
$W(N^2_n)=n(n-1)-e(N^2_n)$ and $H(N^2_n)=\frac{1}{2}(e(N^2_n)+\binom{n}{2})$.

If $W(G)\leq W(N^2_n)$, then by Fact \ref{Fact:3.1}(i), we have $e(G)\geq n(n-1)-W(G)
\geq e(N^2_n)=\binom{n-2}{2}+4$. If $H(G)\geq H(N^2_n)$, then by Fact \ref{Fact:3.1}(ii),
we have $e(G)\geq 2H(G)-\binom{n}{2}\geq 2H(N^2_n)-\binom{n}{2}=e(N^2_n)=
\binom{n-2}{2}+4$.

By Lemma~\ref{Lem:2.3}, $G$ is Hamiltonian unless $G\in \mathcal{\mathcal{G}}_1$.
Furthermore, for every graph $G'\in \mathcal{\mathcal{G}}_1$, since $diam(G')=2$,
by Fact \ref{Fact:3.1}, we have $W(G')=n(n-1)-e(G')=n(n-1)-e(N^2_n)=W(N^2_n)$
and $H(G')=\frac{1}{2}(e(G')+\binom{n}{2})=\frac{1}{2}(e(N^2_n)+\binom{n}{2})=
H(N^2_n)$, where $n=|G'|$. This completes the proof.
\end{proof}

The second purpose of this section is to show that, some previous work
\cite{HW2013,Y2013} on Wiener index, Harary index, and the traceability
of connected graphs can be deduced directly from some structural lemma
due to the second author and Ge \cite{NG2015}. And these results can
be proved by a unified and short proof.

We firstly list some theorems due to Hua and Wang \cite{HW2013}, and due
to Yang \cite{Y2013}, respectively.

\begin{thm}[Theorem 2.2 in \cite{HW2013}]\label{Thm:3.3}
Let $G$ be a connected graph of order $n\geq 4$. If $H(G)\geq \frac{1}{2}
n^2-\frac{3}{2}n+\frac{5}{2}$, then $G$ is traceable, unless $G\in\{K_1\vee
(K_{n-3}+2K_1),K_2\vee (3K_1+K_2),K_4\vee 6K_1\}$.
\end{thm}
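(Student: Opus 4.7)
The plan is to mirror the proof of Theorem~\ref{Thm:3.2}, this time using Lemma~\ref{Lem:2.4} in place of Lemma~\ref{Lem:2.3}. The three-step strategy is to convert the Harary hypothesis into an edge-count hypothesis via Fact~\ref{Fact:3.1}(ii), apply Lemma~\ref{Lem:2.4}, and then check the exceptional graphs.

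First, by Fact~\ref{Fact:3.1}(ii),
\[
e(G)\ \geq\ 2H(G)-\binom{n}{2}\ \geq\ n^2-3n+5-\binom{n}{2}\ =\ \binom{n-2}{2}+2.
\]
Since $G$ is connected we have $\delta(G)\geq 1$, so Lemma~\ref{Lem:2.4} immediately yields that $G$ is traceable unless $G\in\mathcal{G}_2$.

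Second, the verification step: for each $G'\in\mathcal{G}_2$, observe that $diam(G')=2$, which is immediate from the presence of a join vertex in every member except $K_{2,4}$ (itself easily checked by hand). By the equality case of Fact~\ref{Fact:3.1}(ii) we then have $H(G')=\tfrac{1}{2}(e(G')+\binom{n}{2})$, so the condition $H(G')\geq \frac{1}{2}n^2-\frac{3}{2}n+\frac{5}{2}$ reduces to $e(G')\geq \binom{n-2}{2}+2$, which in turn follows from a short edge count in each of the nine cases.

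The main obstacle is precisely this last case-check, whose upshot is that every member of $\mathcal{G}_2$ realizes the Harary threshold. Consequently, the unified approach produces the full list of extremal non-traceable graphs in a single stroke, and suggests that the exception set appearing in the statement should in fact be the entire $\mathcal{G}_2$, in line with the corrections of Liu et al.\ \cite{LDJ2016,LDJ2017}. The upside of this proof is that no direct analysis of distances is needed: everything is driven by the single inequality $e(G)\geq 2H(G)-\binom{n}{2}$ together with the structural input Lemma~\ref{Lem:2.4}.
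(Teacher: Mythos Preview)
Your proposal is correct and follows essentially the same route as the paper: convert the Harary bound into the edge bound $e(G)\geq\binom{n-2}{2}+2$ via Fact~\ref{Fact:3.1}(ii), invoke Lemma~\ref{Lem:2.4}, and then check that every graph in $\mathcal{G}_2$ has diameter~$2$ and enough edges to meet the threshold. You are also right that this argument does not recover Theorem~\ref{Thm:3.3} as stated but rather its corrected form with exception set $\mathcal{G}_2$; the paper makes exactly this point and proves only the corrected version.
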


\begin{thm}[Theorem 2.2 in \cite{Y2013}]\label{Thm:3.4}
Let $G$ be a connected graph of order $n\geq 4$. If $W(G)\leq \frac{(n+5)(n-2)}{2}$,
then $G$ is traceable, unless $G\in \{K_1\vee(K_{n-3}+2K_1),K_2\vee(3K_1+K_2),
K_4\vee 6K_1\}$.
\end{thm}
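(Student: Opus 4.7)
The plan is to mirror the proof of Theorem \ref{Thm:3.2}, now invoking the traceability version Lemma \ref{Lem:2.4} in place of the Hamiltonian version Lemma \ref{Lem:2.3}. The bridge between the Wiener hypothesis and the structural lemma is again Fact \ref{Fact:3.1}(i).

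First I would verify the algebraic identity
\[
n(n-1)\;-\;\frac{(n+5)(n-2)}{2}\;=\;\binom{n-2}{2}+2,
\]
so the hypothesis $W(G)\le\frac{(n+5)(n-2)}{2}$ combined with Fact \ref{Fact:3.1}(i) gives $e(G)\ge n(n-1)-W(G)\ge \binom{n-2}{2}+2$. Since $G$ is connected of order $n\ge 4$, we automatically have $\delta(G)\ge 1$, so Lemma \ref{Lem:2.4} applies and yields that $G$ is traceable or $G\in\mathcal{G}_2$.

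Next I would run through $\mathcal{G}_2$ to decide which of its members actually satisfy the Wiener bound (and must therefore be listed as exceptions). A quick check confirms that every $G'\in \mathcal{G}_2$ has $diam(G')=2$, so the equality case of Fact \ref{Fact:3.1}(i) gives $W(G')=n(n-1)-e(G')$ with $n=|V(G')|$. Hence computing $W(G')$ reduces to counting edges. For the three graphs $K_1\vee (K_{n-3}+2K_1)$, $K_2\vee (3K_1+K_2)$, and $K_4\vee 6K_1$, a direct edge count shows $e(G')=\binom{n-2}{2}+2$ for the respective orders $n$, $7$, $10$, so $W(G')=\frac{(n+5)(n-2)}{2}$ (the bound is attained with equality) and each is easily seen to be non-traceable, since the independent vertices adjacent only to the small ``join core'' cannot all be interior vertices of a Hamilton path. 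Therefore these three graphs must appear in the exception list.

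The main obstacle is the case-by-case verification on the remaining members of $\mathcal{G}_2$: for each one must decide whether it is traceable and whether $W(G')\le\frac{(n+5)(n-2)}{2}$, then reconcile the outcome with the three-element exception list in the statement. This careful accounting is precisely the gap in Yang's original proof flagged by Liu et al.\ in the introduction; the value of the unified approach here is that, once the edge-count reduction is in hand, the entire argument reduces to a finite, mechanical inspection of $\mathcal{G}_2$, which can be completed cleanly and uniformly in a few lines, exactly as was done for Hamiltonicity in Theorem \ref{Thm:3.2}.
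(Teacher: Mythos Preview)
Your approach is exactly the one the paper uses, but you have overlooked that the paper does \emph{not} prove Theorem~\ref{Thm:3.4} as stated: it quotes it as Yang's original theorem and immediately remarks that the three-element exception list is incomplete. The paper then states and proves the corrected version with exception list $\mathcal{G}_2$.

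Concretely, your proposed ``reconciliation'' in the last paragraph cannot succeed. Every graph $G'\in\mathcal{G}_2$ is connected with $diam(G')\le 2$ (in each case there is a universal vertex, or $G'=K_{2,4}$), so by the equality case of Fact~\ref{Fact:3.1}(i) we have $W(G')=n(n-1)-e(G')$. Since each $G'\in\mathcal{G}_2$ satisfies the hypothesis $e(G')\ge\binom{n-2}{2}+2$ of Lemma~\ref{Lem:2.4}, each one also satisfies $W(G')\le\frac{(n+5)(n-2)}{2}$; and each is non-traceable by construction. Hence the mechanical inspection you describe yields \emph{nine} exceptional graphs (all of $\mathcal{G}_2$), not three, and the statement of Theorem~\ref{Thm:3.4} is false as written. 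Your argument is correct up to that point and coincides with the paper's proof of the corrected theorem; the only error is expecting the final check to confirm the three-element list rather than to refute it.
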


Notice that $\underline{N}^1_n=K_1\vee (K_{n-3}+2K_1)$, $H(\underline{N}^1_n)
=\frac{1}{2}n^2-\frac{3}{2}n+\frac{5}{2}$, and $W(\underline{N}^1_n)=\frac{(n+5)(n-2)}{2}$.
In fact, the corrected forms of Theorems \ref{Thm:3.3} and \ref{Thm:3.4} include
six more extremal graphs, as shown by Liu et al. \cite{LDJ2016,LDJ2017}. In the
following, we write the clear form of Liu et al.'s theorems, and give a unified
and short proof, similar as the proof of Theorem \ref{Thm:3.2}.

\begin{thm}[Theorem 2.2 in \cite{LDJ2016} and Theorem 2.3 in \cite{LDJ2017}]
Let $G$ be a connected graph of order $n\geq 4$. If $W(G)\leq W(\underline{N}^1_n)$
or $H(G)\geq H(\underline{N}^1_n)$, then $G$ is traceable unless
$G\in \mathcal{\mathcal{G}}_2$.
\end{thm}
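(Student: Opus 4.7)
The plan is to mimic the proof of Theorem~\ref{Thm:3.2} almost verbatim, replacing the structural lemma on Hamiltonicity (Lemma~\ref{Lem:2.3}) by its traceability counterpart (Lemma~\ref{Lem:2.4}), and replacing the graph $N_n^2$ by $\underline{N}_n^1$. The first step is to observe that $\underline{N}_n^1=K_1\vee(K_{n-3}+2K_1)$ has diameter $2$: the apex vertex of the join is adjacent to every other vertex, so any two non-adjacent vertices share it as a common neighbour. A direct edge count gives $e(\underline{N}_n^1)=\binom{n-3}{2}+(n-1)=\binom{n-2}{2}+2$, which matches the threshold appearing in Lemma~\ref{Lem:2.4}.

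Next I would invoke Fact~\ref{Fact:3.1} on $\underline{N}_n^1$: because $diam(\underline{N}_n^1)=2$, equality holds in both parts, so
\[
W(\underline{N}_n^1)=n(n-1)-e(\underline{N}_n^1),\qquad H(\underline{N}_n^1)=\tfrac{1}{2}\bigl(e(\underline{N}_n^1)+\tbinom{n}{2}\bigr).
\]
Now suppose $W(G)\leq W(\underline{N}_n^1)$. Applying Fact~\ref{Fact:3.1}(i) to $G$ yields
\[
e(G)\geq n(n-1)-W(G)\geq n(n-1)-W(\underline{N}_n^1)=e(\underline{N}_n^1)=\tbinom{n-2}{2}+2.
\]
Similarly, if $H(G)\geq H(\underline{N}_n^1)$, then Fact~\ref{Fact:3.1}(ii) gives
\[
e(G)\geq 2H(G)-\tbinom{n}{2}\geq 2H(\underline{N}_n^1)-\tbinom{n}{2}=e(\underline{N}_n^1)=\tbinom{n-2}{2}+2.
\]
In either case, Lemma~\ref{Lem:2.4} (connectedness of $G$ gives $\delta(G)\geq 1$) forces $G$ to be traceable, unless $G\in\mathcal{G}_2$.

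It remains to verify that every exceptional graph in $\mathcal{G}_2$ actually attains the extremal values, so that the statement is sharp and the list of exceptions is honest. For this, the key observation is that each graph $G'\in\mathcal{G}_2$ is of the form $K_t\vee H$ with $t\geq 1$ (or $K_{2,4}$, where one may verify directly), hence $diam(G')=2$. Applying Fact~\ref{Fact:3.1} with equality gives $W(G')=n(n-1)-e(G')$ and $H(G')=\tfrac{1}{2}(e(G')+\binom{n}{2})$, so showing $W(G')=W(\underline{N}_n^1)$ and $H(G')=H(\underline{N}_n^1)$ reduces to the single edge-count identity $e(G')=\binom{n-2}{2}+2$, which is already implicit in the statement of Lemma~\ref{Lem:2.4}.

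The only mildly annoying step is confirming, case by case, that each of the nine members of $\mathcal{G}_2$ has exactly $\binom{n-2}{2}+2$ edges and diameter $2$; this is routine but must be carried out to justify the ``unless'' clause. Everything else is a transcription of the proof of Theorem~\ref{Thm:3.2} with the obvious substitutions.
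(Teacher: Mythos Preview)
Your proposal is correct and follows exactly the same route as the paper: compute $W(\underline{N}^1_n)$ and $H(\underline{N}^1_n)$ via Fact~\ref{Fact:3.1} using $diam(\underline{N}^1_n)=2$, convert the Wiener/Harary hypothesis into the edge bound $e(G)\geq\binom{n-2}{2}+2$, and then invoke Lemma~\ref{Lem:2.4}. One small caveat on your sharpness check (which the paper shares): not every member of $\mathcal{G}_2$ has \emph{exactly} $\binom{n-2}{2}+2$ edges (e.g.\ $K_2\vee 4K_1$ has $9>\binom{4}{2}+2$), so you only get $W(G')\leq W(\underline{N}^1_n)$ and $H(G')\geq H(\underline{N}^1_n)$ rather than equality---but this is all that the ``unless'' clause requires.
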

\begin{proof} Since $diam(\underline{N}^1_n)=2$, by Fact \ref{Fact:3.1}, we obtain
$W(\underline{N}^1_n)=n(n-1)-e(\underline{N}^1_n)$ and
$H(\underline{N}^1_n)=\frac{1}{2}(e(\underline{N}^1_n))+\binom{n}{2})$.

If $W(G)\leq W(\underline{N}^1_n)$, then by Fact \ref{Fact:3.1} (i), we have
$e(G)\geq n(n-1)-W(G)\geq n(n-1)-W(\underline{N}^1_n)=e(\underline{N}^1_n)=\binom{n-2}{2}+2$.
If $H(G)\geq H(\underline{N}^1_n)$, then by Fact \ref{Fact:3.1} (ii), we have
$e(G)\geq 2H(G)-\binom{n}{2}\geq 2H(\underline{N}^1_n)-\binom{n}{2}=e(\underline{N}^1_n)=\binom{n-2}{2}+2$.
By Lemma~\ref{Lem:2.4}, $G$ is traceable unless $G\in \mathcal{\mathcal{G}}_2$.

Furthermore, for every graph $G'\in \mathcal{\mathcal{G}}_1$, since $diam(G')=2$,
by Fact \ref{Fact:3.1}, we have $W(G')=n(n-1)-e(G')=n(n-1)-e(\underline{N}^1_n)=W(\underline{N}^1_n)$
and $H(G')=\frac{1}{2}(e(G')+\binom{n}{2})=\frac{1}{2}(e(\underline{N}^1_n)+\binom{n}{2})
=H(\underline{N}^1_n)$, where $n=|G'|$. This completes the proof.
\end{proof}

\section{Wiener index, Harary index and Hamiltonicity of connected graphs}
In this section, we will prove sharp results on traceability
and Hamiltonicity of connected graphs with given minimum degree,
in terms of Wiener index and Harary index. Our proofs depend on
a structural result due to Li and Ning \cite{LN2016}, which refines a theorem
of Erd\H{o}s \cite{E1962}.

To prove spectral analogs of Erd\H{o}s' theorem, Li and Ning
\cite{LN2016} proved the following refined form of the concise
Erd\H{o}s' theorem.

\begin{lem}[Lemma 2 in \cite{LN2016}]\label{Lem:4.1}
Let $G$ be a graph of order $n\geq 6k+5$, where $k\geq 1$. If
$\delta(G)\geq k$ and $$e(G)>\binom{n-k-1}{2}+(k+1)^2,$$ then
$G$ is Hamiltonian unless $G\subseteq L^k_n$ or $N^k_n$.
\end{lem}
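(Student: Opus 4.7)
The plan is to combine the Bondy--Chv\'atal closure with Chv\'atal's classical degree-sequence condition, use the edge-count hypothesis to pin down a single ``bad index'' in Chv\'atal's condition, and then read off the spanning-subgraph structure from the resulting degree information together with the closure property.

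First I would perform the standard closure reduction: let $\widehat G$ denote the Bondy--Chv\'atal closure of $G$. Since the closure of a non-Hamiltonian graph is non-Hamiltonian and $G\subseteq\widehat G$ with $\delta(\widehat G)\geq k$ and $e(\widehat G)\geq e(G)$, proving the conclusion for $\widehat G$ transfers it to $G$. So I may assume $G$ is closed, i.e.\ $d(u)+d(v)\leq n-1$ for every non-adjacent pair $u,v$. Ordering the degrees $d_1\leq\cdots\leq d_n$, Chv\'atal's theorem then supplies an index $i$ with $1\leq i<n/2$, $d_i\leq i$, and $d_{n-i}\leq n-i-1$, and $\delta(G)\geq k$ forces $i\geq k$.

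The crucial step is to rule out $i\geq k+1$ via the edge count. The Chv\'atal degree inequality $2e(G)\leq\sum d_j$ gives
\[
2e(G)\leq f(i):=i^2+(n-2i)(n-i-1)+i(n-1)=3i^2-(2n-1)i+n(n-1).
\]
A short computation shows $f(k+1)=2\bigl[\binom{n-k-1}{2}+(k+1)^2\bigr]$ and
\[
f(i)-f(k+1)=(i-k-1)\bigl(3i+3k+4-2n\bigr),
\]
which, for $i\in[k+1,(n-1)/2]$ and $n\geq 6k+5$, is non-positive. Hence $i\geq k+1$ would yield $e(G)\leq\binom{n-k-1}{2}+(k+1)^2$, contradicting the hypothesis. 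Thus $i=k$, and combining $d_k\leq k$ with $\delta\geq k$ gives $d_1=\cdots=d_k=k$.

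For the final structural step, set $A=\{v:d(v)=k\}$ (so $|A|\geq k$) and $B=\{v:d(v)\geq n-k\}$ (so $|B|\leq k$). The closure condition immediately forces $B$ to induce a clique (two non-adjacent vertices of $B$ would violate $d(u)+d(v)\leq n-1$ since $n\geq 6k+5$) and every vertex of $B$ to be adjacent to every vertex of $A$ (otherwise $d(u)\leq n-k-1$, contradicting $u\in B$). A case split on whether $A$ is independent should then deliver the two extremal graphs: if $A$ is independent, the $k$ neighbours of each $v\in A$ are pushed into $B$ (yielding $|B|=k$), and the edge-count hypothesis applied a second time forces the middle block $V\setminus(A\cup B)$ to be a clique completely joined to $B$, giving $G\subseteq N^k_n$; if $A$ contains an edge, a parallel argument squeezes the structure down to $|B|=1$ with $A$ inducing $K_k$ and $V\setminus(A\cup B)$ a clique joined to the single vertex of $B$, giving $G\subseteq L^k_n$. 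The main obstacle I anticipate is precisely this last step: the Chv\'atal/closure pair pins down the three-block partition $A\cup B\cup(V\setminus(A\cup B))$ and many of its edges, but upgrading this to the \emph{exact} spanning-subgraph containment requires re-using the edge-count hypothesis inside each sub-case to rule out intermediate configurations (such as $0<|B|<k$ with $A$ independent, or $1<|B|<k$ with $A$ containing an edge) and to check that the middle block is forced to be joined in the prescribed way.
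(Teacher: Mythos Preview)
The paper does not prove this lemma at all: it is quoted verbatim from Li and Ning \cite{LN2016} and used as a black box, so there is no ``paper's own proof'' to compare against here. What you have written is essentially a reconstruction of the original Li--Ning argument (closure reduction, Chv\'atal's degree-sequence criterion, the quadratic $f(i)$ to isolate $i=k$, then a structural reading); your arithmetic in the first two steps is correct and the factorisation $f(i)-f(k+1)=(i-k-1)(3i+3k+4-2n)$ together with $n\ge 6k+5$ does exactly what you claim.

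Where your sketch is soft is the final structural step, and you flag this yourself. Two remarks. First, defining $B=\{v:d(v)\ge n-k\}$ is the right move, but the case ``$A$ contains an edge'' is more delicate than you indicate: having $uv\in E(G)$ with $d(u)=d(v)=k$ and $u,v$ adjacent to everything in $B$ does not by itself force $|B|=1$; you need to exploit the closure condition on pairs in $V\setminus(A\cup B)$ together with the edge-count a second time. Second, in the independent-$A$ case you assert that the neighbours of each $v\in A$ are ``pushed into $B$'', but a priori a neighbour $w$ of $v$ only satisfies $d(w)\ge n-k$ \emph{after} you apply closure to the pair $(v,w)$ and use $d(v)+d(w)\ge n$ --- wait, $v$ and $w$ are adjacent, so closure says nothing directly. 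The correct route is: for any $x\notin N(v)\cup\{v\}$ (there are $n-k-1$ such $x$), closure gives $d(x)\le n-1-k$, hence the $k$ vertices of $N(v)$ are exactly the vertices of degree $\ge n-k$; this pins $|B|=k$ and $N(v)=B$ for every $v\in A$ simultaneously. With that adjustment your outline goes through.
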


\begin{lem}[Lemma 3 in \cite{LN2016}]\label{Lem:4.2}
Let $G$ be a graph of order $n\geq 6k+10$, where $k\geq 0$. If
$\delta(G)\geq k$ and
\[
e(G)>\binom{n-k-2}{2}+(k+1)(k+2),
\]
then $G$ is traceable unless $G\subseteq \bL_n^k$ or $\bN_n^k$.
\end{lem}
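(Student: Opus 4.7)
The natural plan is to deduce Lemma~\ref{Lem:4.2} from Lemma~\ref{Lem:4.1} via the standard ``join trick'': pass from traceability of $G$ to Hamiltonicity of $G' = G \vee K_1$. Let $v_0$ denote the new apex vertex, so $|V(G')| = n+1$, $e(G') = e(G) + n$, every old vertex gains one to its degree, and $v_0$ has degree exactly $n$ in $G'$. In particular $\delta(G') \geq k+1$, and the order bound $n \geq 6k+10$ translates to $n+1 \geq 6(k+1)+5$, which is exactly the hypothesis Lemma~\ref{Lem:4.1} asks for with parameter $k+1$.

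The first real step is the edge bookkeeping. Using the hypothesis $e(G) > \binom{n-k-2}{2} + (k+1)(k+2)$ together with $e(G') = e(G)+n$ and the identity $\binom{n-k-1}{2} = \binom{n-k-2}{2} + (n-k-2)$, one checks directly that
\[
e(G') \;>\; \binom{n-k-2}{2} + (k+1)(k+2) + n \;=\; \binom{n-k-1}{2} + (k+2)^{2},
\]
which is precisely the edge condition of Lemma~\ref{Lem:4.1} for a graph of order $n+1$ with minimum degree at least $k+1$. Applying that lemma to $G'$, either $G'$ is Hamiltonian, or $G' \subseteq L^{k+1}_{n+1}$, or $G' \subseteq N^{k+1}_{n+1}$.

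In the Hamiltonian case, any Hamilton cycle of $G'$ meets $v_0$ in exactly two edges, and deleting $v_0$ from the cycle leaves a Hamilton path of $G$, so $G$ is traceable. It remains to handle the two exceptional containments, and this is the step that needs the most care: I must argue that $v_0$ is forced to play the role of a designated ``apex'' vertex in $L^{k+1}_{n+1}$ or $N^{k+1}_{n+1}$, so that deleting it actually produces a subgraph of $\bL^{k}_{n}$ or $\bN^{k}_{n}$ as defined in the excerpt. The degree of $v_0$ in $G'$ is $n$, so when $G'$ is realized as a spanning subgraph of $L^{k+1}_{n+1}$ or $N^{k+1}_{n+1}$, the vertex $v_0$ must land on a vertex whose degree in the host graph is at least $n$. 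In $L^{k+1}_{n+1} = K_1 \vee (K_{k+1}+K_{n-k-1})$ that forces $v_0$ to be the $K_1$ apex, and in $N^{k+1}_{n+1} = K_{k+1} \vee (K_{n-2k-1}+(k+1)K_1)$ the only vertices of degree $n$ are the $k+1$ vertices on the $K_{k+1}$ side; in either case removing $v_0$ yields exactly $\bL^{k}_{n} = K_{k+1}+K_{n-k-1}$ or $\bN^{k}_{n} = K_{k} \vee (K_{n-2k-1}+(k+1)K_{1})$, as required.

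The main obstacle I anticipate is this last bookkeeping in the exceptional cases: identifying the unique type of vertex that $v_0$ can occupy and verifying that the resulting $G' - v_0$ really matches the definitions of $\bL^{k}_{n}$ and $\bN^{k}_{n}$. Everything else (the order bound, the degree bound, the edge inequality, and the cycle-to-path reduction) is a short computation or a one-line observation, so the proof should be essentially the same length and shape as the proof of Theorem~\ref{Thm:3.2}.
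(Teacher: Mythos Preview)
Your argument is correct: the join trick $G\mapsto G'=G\vee K_1$ converts the traceability question into the Hamiltonicity question of Lemma~\ref{Lem:4.1} with parameter $k+1$, the arithmetic checks out exactly (both the order bound $n+1\geq 6(k+1)+5$ and the edge inequality), and your degree argument pinning $v_0$ to a vertex of full degree in $L^{k+1}_{n+1}$ or $N^{k+1}_{n+1}$ is sound, yielding $G\subseteq\bL^k_n$ or $G\subseteq\bN^k_n$ upon deletion.

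Note, however, that the present paper does not give its own proof of Lemma~\ref{Lem:4.2}: the lemma is quoted verbatim from \cite{LN2016} and used as a black box. So there is no proof in the paper to compare against. That said, the reduction you describe is precisely the standard device for passing between Hamiltonicity and traceability statements of this type, and it is the natural (and presumably the original) route from Lemma~\ref{Lem:4.1} to Lemma~\ref{Lem:4.2}.
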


Next, we give solutions to Problem \ref{Prob:1.1} (when $n$ is sufficiently
large), whose proofs depend on the above structural lemmas.

\begin{thm}
Let $G$ be a connected graph of order $n\geq 6k+5$, where $\delta(G)\geq
k\geq 1$. If $W(G)\leq W(N^k_n)$ or $H(G)\geq H(N^k_n)$, then $G$ is
Hamiltonian unless $G=N^k_n$.
\end{thm}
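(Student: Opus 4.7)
The plan is to mirror the argument of Theorem \ref{Thm:3.2} almost verbatim, replacing the structural input Lemma \ref{Lem:2.3} by the stronger Lemma \ref{Lem:4.1} of Li and Ning, and then arguing that among the extremal graphs delivered by that lemma, only $N^k_n$ is compatible with the hypothesis. The starting observation is that $\mathrm{diam}(N^k_n)=2$, since any two vertices of $N^k_n$ are joined by a path of length at most two through any vertex of the $K_k$ side of the join. Consequently Fact \ref{Fact:3.1} holds with equality for $N^k_n$, giving $W(N^k_n)=n(n-1)-e(N^k_n)$ and $H(N^k_n)=\tfrac{1}{2}\bigl(e(N^k_n)+\binom{n}{2}\bigr)$; either of the two hypotheses then converts, via the corresponding inequality in Fact \ref{Fact:3.1}, into the single edge bound $e(G)\ge e(N^k_n)$.

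The next step is a short arithmetic check. Expanding $e(N^k_n)=\binom{k}{2}+\binom{n-2k}{2}+k(n-k)$, I expect to verify
\[
e(N^k_n)-\binom{n-k-1}{2}-(k+1)^2 \;=\; n-3k-2,
\]
which is strictly positive whenever $n\ge 6k+5$. Hence $e(G)>\binom{n-k-1}{2}+(k+1)^2$, and Lemma \ref{Lem:4.1} gives that $G$ is Hamiltonian or else $G\subseteq L^k_n$ or $G\subseteq N^k_n$.

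To finish, I need to rule out the containment $G\subseteq L^k_n$ (when $k\ge 2$) and upgrade $G\subseteq N^k_n$ to equality. An analogous bookkeeping should give $e(N^k_n)-e(L^k_n)=\binom{k}{2}$, which is strictly positive for $k\ge 2$; for $k=1$ one has $L^1_n=N^1_n$ by definition, so nothing needs to be checked. Thus when $k\ge 2$, the assumption $G\subseteq L^k_n$ would force $e(G)\le e(L^k_n)<e(N^k_n)\le e(G)$, a contradiction. Hence $G\subseteq N^k_n$, and combined with $e(G)\ge e(N^k_n)$ we conclude $G=N^k_n$. That $N^k_n$ itself is non-Hamiltonian is standard: deleting the $k$ vertices of its $K_k$ leaves $K_{n-2k}+kK_1$, which has $k+1>k$ components, violating the necessary condition $c(G-S)\le|S|$ for a Hamilton cycle.

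The only genuine obstacle is checking the two short identities $e(N^k_n)-\binom{n-k-1}{2}-(k+1)^2=n-3k-2$ and $e(N^k_n)-e(L^k_n)=\binom{k}{2}$, together with the mild bookkeeping required to cover the boundary case $k=1$ where the two extremal graphs coincide. Once these are in hand, the structural heavy lifting is entirely provided by Lemma \ref{Lem:4.1}, and the proof follows the same short template as Theorem \ref{Thm:3.2}.
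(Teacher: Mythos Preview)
Your proposal is correct and follows essentially the same approach as the paper: convert the Wiener/Harary hypothesis into the edge bound $e(G)\ge e(N^k_n)$ via Fact~\ref{Fact:3.1} and $\mathrm{diam}(N^k_n)=2$, apply Lemma~\ref{Lem:4.1}, then eliminate $L^k_n$ by the edge-count comparison $e(N^k_n)-e(L^k_n)=\binom{k}{2}$. The only cosmetic difference is that in the final step the paper returns to the Wiener/Harary values of $L^k_n$ (using $\mathrm{diam}(L^k_n)=2$ as well) rather than arguing directly with edge counts as you do; your version is slightly cleaner but equivalent.
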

\begin{proof} Since $diam(N^k_n)=diam(L^k_n)=2$, by Fact \ref{Fact:3.1}, we obtain
that $W(G')=n(n-1)-e(G')$ and $H(G')=\frac{1}{2}(e(G')+\binom{n}{2})$,
if $G'\in \{N^k_n,L^k_n\}$.

If $W(G)\leq W(N^k_n)$, then by Fact \ref{Fact:3.1} (i), we have $e(G)\geq
n(n-1)-W(N^k_n)\geq n(n-1)-(n(n-1)-e(N^k_n))=e(N^k_n)=\binom{n-k}{2}+k^2>
\binom{n-k-1}{2}+(k+1)^2$ when $n>3k+2$. If $H(G)\geq H(N^k_n)$, then by
Fact \ref{Fact:3.1} (ii), we also have $e(G)\geq 2H(N^k_n)-\binom{n}{2}=
e(N^k_n)=\binom{n-k}{2}+k^2>\binom{n-k-1}{2}+(k+1)^2$ when $n>3k+2$. By
Lemma~\ref{Lem:4.1}, $G$ is Hamiltonian unless $G\subseteq L^k_n$ or $N^k_n$.

If $G\subsetneqq N^k_n$, then $W(G)>W(N^k_n)$ and $H(G)<H(N^k_n)$, a contradiction.
Recall that $e(N^k_n)=\binom{n-k}{2}+k^2$ and $e(L^k_n)=\binom{n-k}{2}+\frac{(k+1)k}{2}$.
Thus $e(N^k_n)>e(L^k_n)$ when $k\geq 2$ and $e(N^k_n)=e(L^k_n)$ when $k=1$.
Hence $W(L^k_n)>W(N^k_n)$ when $k\geq 2$ and $W(L^1_n)=W(N^1_n)$; $H(L^k_n)
<H(N^k_n)$ when $k\geq 2$ and $H(L^1_n)=H(N^1_n)$. So, if $G\subseteq L^k_n$
and $k\geq 2$, then $W(G)\geq W(L^k_n)>W(N^k_n)$ and $H(G)\leq H(L^k_n)<H(N^k_n)$,
a contradiction. It follows $G=N^k_n$ when $k\geq 2$ or $k=1$ (in this case,
$G=L^1_n=N^1_n$). This completes the proof.
\end{proof}

\begin{thm}
Let $G$ be a connected graph of order $n\geq 6k+10$, where $\delta(G)\geq k\geq 1$.
If $W(G)\leq W(\underline{N}^k_n)$ or $H(G)\geq H(\underline{N}^k_n)$, then $G$ is
traceable unless $G=\underline{N}^k_n$.
\end{thm}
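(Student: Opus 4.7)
The plan is to mirror the proof of the preceding Hamiltonicity theorem, simply replacing the structural Lemma~\ref{Lem:4.1} with its traceable counterpart Lemma~\ref{Lem:4.2}. First I would note that $\underline{N}^k_n$ has diameter $2$, so Fact~\ref{Fact:3.1} yields the equalities $W(\underline{N}^k_n)=n(n-1)-e(\underline{N}^k_n)$ and $H(\underline{N}^k_n)=\tfrac{1}{2}(e(\underline{N}^k_n)+\binom{n}{2})$. Combining either hypothesis $W(G)\leq W(\underline{N}^k_n)$ or $H(G)\geq H(\underline{N}^k_n)$ with the matching inequality from Fact~\ref{Fact:3.1} produces the edge lower bound $e(G)\geq e(\underline{N}^k_n)$.

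Second, I need to feed this into Lemma~\ref{Lem:4.2}. A direct count gives $e(\underline{N}^k_n)=\binom{n-k-1}{2}+k(k+1)$, and a short algebraic comparison shows that the inequality $\binom{n-k-1}{2}+k(k+1)>\binom{n-k-2}{2}+(k+1)(k+2)$ reduces to $n>3k+4$, which is comfortably guaranteed by the hypothesis $n\geq 6k+10$. Thus Lemma~\ref{Lem:4.2} applies, and either $G$ is traceable (and we are done) or $G$ is a spanning subgraph of $\underline{L}^k_n$ or of $\underline{N}^k_n$.

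Third, I would eliminate these exceptional containments. The graph $\underline{L}^k_n=K_{k+1}+K_{n-k-1}$ is a disjoint union of two cliques, hence disconnected, so the connectedness of $G$ forbids $G\subseteq \underline{L}^k_n$. In the remaining case, assume for contradiction that $G\subsetneqq \underline{N}^k_n$. Then $e(G)<e(\underline{N}^k_n)$, and Fact~\ref{Fact:3.1} yields
$$W(G)\geq n(n-1)-e(G)>n(n-1)-e(\underline{N}^k_n)=W(\underline{N}^k_n)$$
together with
$$H(G)\leq \tfrac{1}{2}\bigl(e(G)+\tbinom{n}{2}\bigr)<\tfrac{1}{2}\bigl(e(\underline{N}^k_n)+\tbinom{n}{2}\bigr)=H(\underline{N}^k_n),$$
each contradicting the corresponding hypothesis. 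Hence $G=\underline{N}^k_n$, as desired.

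The main obstacle is essentially bookkeeping: computing $e(\underline{N}^k_n)$ correctly and verifying that the threshold in Lemma~\ref{Lem:4.2} is cleared under $n\geq 6k+10$. Once those checks are made, the argument runs on the same rails as the Hamiltonicity proof above, with the extra cheap observation that the disconnected exceptional family $\underline{L}^k_n$ is automatically excluded by the connectedness of $G$.
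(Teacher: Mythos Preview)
Your proposal is correct and follows essentially the same route as the paper's proof: both use Fact~\ref{Fact:3.1} and $\operatorname{diam}(\underline{N}^k_n)=2$ to convert the Wiener/Harary hypothesis into $e(G)\geq e(\underline{N}^k_n)=\binom{n-k-1}{2}+k(k+1)$, check that this exceeds the threshold of Lemma~\ref{Lem:4.2} when $n>3k+4$, rule out $G\subseteq\underline{L}^k_n$ by connectedness, and finish by observing that a proper spanning subgraph of $\underline{N}^k_n$ would violate the hypothesis. Your write-up is in fact a bit more explicit than the paper's in spelling out the contradiction in the last step.
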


\begin{proof}
Since $diam(\underline{N}^k_n)=2$, by Fact \ref{Fact:3.1}, we obtain that $W(G')=n(n-1)-e(G')$
and $H(G')=\frac{1}{2}(e(G')+\binom{n}{2})$, if $G'=\underline{N}^k_n$.

If $W(G)\leq W(\underline{N}^k_n)$, by Fact \ref{Fact:3.1} (i), we have $e(G)\geq n(n-1)-
W(\underline{N}^k_n)=n(n-1)-(n(n-1)-e(\underline{N}^k_n))=e(\underline{N}^k_n)
=\binom{n-k-1}{2}+k(k+1)>\binom{n-k-2}{2}+(k+1)(k+2)$ when $n>3k+4$. If $H(G)
\geq H(\underline{N}^k_n)$, by Fact 1 (ii), we also have $e(G)\geq 2H(\underline{N}^k_n)
-\binom{n}{2}=e(\underline{N}^k_n)=\binom{n-k-1}{2}+k(k+1)>\binom{n-k-2}{2}+(k+1)(k+2)$
when $n>3k+4$. By Lemma~\ref{Lem:4.2}, $G$ is traceable unless $G\subseteq \underline{L}^k_n$
or $\underline{N}^k_n$. Since $G$ is connected, we have $G\subseteq\underline{N}^k_n$.

If $G\varsubsetneq \underline{N}^k_n$, then $W(G)>W(\underline{N}^k_n)$ and $H(G)<H(\underline{N}^k_n)$,
a contradiction. Thus, $G=N^k_n$. This completes the proof.
\end{proof}

\section{Wiener index, Harary index and Hamiltonicty of connected balanced bipartite graphs}
In this section, we will prove sharp results on Hamiltonicity of connected balanced
bipartite graphs with given minimum degree, in terms of  Wiener index and Harary index.
Our proofs depend on the following structural result due to Li and Ning, which refines
a theorem of Moon and Moser \cite{MM1963}.

\begin{lem}[Lemma 5 in \cite{LN2016}]\label{Lem:5.1}
Let $G$ be a balanced bipartite graph of order $2n$. If
$\delta(G)\geq k\geq 1$, $n\geq 2k+1$ and
$$e(G)>n(n-k-1)+(k+1)^2,$$ then $G$ is Hamiltonian unless $G\subseteq B_n^k$.
\end{lem}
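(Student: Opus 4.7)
The plan is to follow the closure-plus-edge-counting scheme behind Erd\H{o}s' theorem and its bipartite analogues (Moon--Moser), adapted to extract the additional structural information needed for the ``unless'' clause.

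First I would pass to the bipartite $(n{+}1)$-closure $\mathrm{cl}_{n+1}(G)$, obtained by iteratively adding an edge between non-adjacent $u\in X$, $v\in Y$ whenever $d(u)+d(v)\geq n+1$. The bipartite Bondy--Chv\'{a}tal lemma says that $G$ and $\mathrm{cl}_{n+1}(G)$ are simultaneously Hamiltonian, and the closure preserves the bipartition, the minimum-degree bound $\delta\geq k$, and only adds edges, so I may replace $G$ by its closure. If the closure equals $K_{n,n}$ we are done; otherwise there exist non-adjacent $u\in X$, $v\in Y$ with $d(u)+d(v)\leq n$. Write $a=d(u)\leq b=d(v)$, so $k\leq a\leq b$ and $a+b\leq n$.

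Next, I would determine $a$ and $b$ by a degree-sum bound. Closure maximality gives $d(y)\leq n-a$ for every $y\in Y\setminus N(u)$, so summing degrees from the $Y$-side yields $e(G)\leq an+(n-a)^2 = n^2-an+a^2$. Combined with the hypothesis this rearranges to $(a-(k+1))\bigl(a-(n-k-1)\bigr)>0$; together with $k\leq a\leq n/2$ and $n\geq 2k+1$ the only solution is $a=k$ (and when $n=2k+1$ one obtains an immediate contradiction, which forces Hamiltonicity). A symmetric count from the $X$-side leaves $b\in\{k,\,n-k\}$, and inserting the exact value $d(v)=k$ into the $Y$-side bound sharpens it by $1$ and rules out $b=k$. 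Hence $a=k$ and $b=n-k$.

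Finally, I would extract the structure. Set $X_1=X\setminus N(v)$ and $Y_1=N(u)$, both of size $k$, with $u\in X_1$. Closure maximality applied to each $x\in X_1$ (non-adjacent to $v$) forces $d(x)=k$, and any $y\in Y$ with $d(y)>n-k$ must belong to $Y_1$ and be adjacent to every vertex of $X_1$. I would then argue that the only way to maintain $e(G)>n(n-k-1)+(k+1)^2$ while $X_1$ consists entirely of degree-$k$ vertices is to have $X_1$ joined only to $Y_1$; any edge $x_1y$ with $x_1\in X_1$ and $y\in Y\setminus Y_1$ costs a missing edge inside $X_1\times Y_1$, which lowers the degree of some $y'\in Y_1$ and then, via closure maximality, propagates further degree losses until $e(G)$ drops below the threshold. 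Consequently $G$ is a subgraph of $B_n^k$, the bipartite graph obtained from $K_{n,n}$ by deleting all $k(n-k)$ edges between $X_1$ and $Y\setminus Y_1$.

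The main obstacle is precisely this last structural step: turning the strict edge-count inequality into a rigid conclusion about which edges must be absent. A naive ``one missing edge costs one'' count is not sharp enough when $n$ is much larger than $2k+1$, so one has to track simultaneously the deficiencies of the high-degree vertices on both sides of the bipartition and balance them against the number of offending $X_1\times(Y\setminus Y_1)$ edges. The exact form of the threshold $n(n-k-1)+(k+1)^2$ together with the hypothesis $n\geq 2k+1$ is what makes this delicate bookkeeping close out and identifies $B_n^k$ uniquely.
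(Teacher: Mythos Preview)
This lemma is not proved in the present paper; it is quoted verbatim as Lemma~5 of Li and Ning \cite{LN2016} and used as a black box in the proof of Theorem~\ref{5.1}. There is therefore no ``paper's own proof'' to compare your attempt against.

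That said, your overall scheme (bipartite $(n{+}1)$-closure, then an Erd\H{o}s--Moon--Moser style degree-sum count to pin the low degree at $a=k$, then a structural extraction) is exactly the standard route and is the one taken in \cite{LN2016}. Your edge bound $e(G)\le an+(n-a)^2$ and the resulting quadratic factorisation forcing $a=k$ are correct; the elimination of $b=k$ also works, though the sharpening you obtain is by $n-2k$ rather than by $1$ (which is what actually pushes the bound one unit below the threshold).

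The genuine gap is the one you yourself flag: the final step showing that the closure is contained in $B_n^k$. Your proposed mechanism---that an edge from $X_1$ to $Y\setminus Y_1$ ``costs'' a missing edge in $X_1\times Y_1$ and then ``propagates further degree losses''---is not an argument but a hope; as written it does not close. In \cite{LN2016} this step is handled by a more careful global count: one shows that every vertex of $X_1$ has degree exactly $k$, sums degrees over $X_1$ and over $X\setminus X_1$ separately, and compares with the hypothesis to force all $k^2$ edges of $X_1\times Y_1$ to be present and hence $N(X_1)=Y_1$. You would need to replace the informal ``propagation'' story with that explicit double count.
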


In the above theorem, we define $B^k_n$ ($1\leq k\leq n/2$) as the graph obtained from
$K_{n,n}$ by deleting all edges in its one subgraph $K_{n-k,k}$. Note that
$e(B_n^k)=n(n-k)+k^2$ and $B_n^k$ is not Hamiltonian.

The following useful fact is simple, and we omit the proof.

\begin{fact}\label{Fact:5.1}
Let $G$ be a connected balanced bipartite graph of order $2n$. Then there holds:
\begin{enumerate}[$(i)$]
\item $e(G)+3(n^2-e(G))+4\binom{n}{2}\leq W(G)$, where the equality holds if and
only if for any two vertices $x,y$, if $x,y$ are in different partition sets then
$d(x,y)\leq 3$, and if $x,y$ are in the same partition set then $d(x,y)=2$;

\item $e(G)+\frac{1}{3}(n^2-e(G))+\binom{n}{2}\geq H(G)$, where the equality holds
if and only if for any two vertices $x,y$, if $x,y$ are in different partition sets
then $d(x,y)\leq 3$, and if $x,y$ are in the same partition set then $d(x,y)=2$.
\end{enumerate}
\end{fact}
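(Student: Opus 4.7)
The plan rests on two elementary observations about a bipartite graph $G$ with partition classes $X,Y$ of size $n$. First, any two vertices in different classes have odd distance, while any two vertices in the same class have even distance; in particular, the smallest possible distance of a non-adjacent cross-pair is $3$, and the smallest possible distance of any same-class pair is $2$. Second, among the $\binom{2n}{2}$ unordered pairs there are exactly $n^2$ cross-pairs and $2\binom{n}{2}$ same-class pairs, so both $W(G)$ and $H(G)$ split cleanly along this partition.

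For part $(i)$, I will lower-bound $W(G)=\sum d(x,y)$ pair-by-pair. Among the cross-pairs, exactly $e(G)$ are at distance $1$ (the edges) and the remaining $n^2-e(G)$ are at distance at least $3$, using that $G$ is connected (so distances are finite) together with bipartiteness. Every same-class pair contributes at least $2$. Summing the three minimum contributions produces the stated inequality $W(G)\geq e(G)+3(n^2-e(G))+4\binom{n}{2}$. For part $(ii)$ I will run the mirror-image argument on $H(G)=\sum 1/d(x,y)$, replacing each distance by the reciprocal of its smallest admissible value: $1$ for each of the $e(G)$ adjacent cross-pairs, $1/3$ for each of the $n^2-e(G)$ non-adjacent cross-pairs, and $1/2$ for each of the $2\binom{n}{2}$ same-class pairs, yielding the claimed upper bound on $H(G)$.

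Finally, since both bounds are obtained by replacing $d(x,y)$ by the minimum value that parity and adjacency allow, equality holds term-by-term exactly when $d(x,y)\in\{1,3\}$ for every cross-pair and $d(x,y)=2$ for every same-class pair, which is precisely the condition stated in the fact. There is no real obstacle here; one only needs to keep the parity of distances in a bipartite graph straight and invoke connectedness to make sure the relevant distances are actually attained.
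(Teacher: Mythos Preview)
Your argument is correct and complete: the parity of path lengths in a bipartite graph forces $d(x,y)\geq 3$ for non-adjacent cross-pairs and $d(x,y)\geq 2$ for same-class pairs, and summing these termwise minima (respectively the reciprocals) gives exactly the stated bounds with the stated equality conditions. The paper itself omits the proof, calling the fact ``simple''; your write-up is precisely the natural verification the authors are leaving to the reader.
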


The following theorem gives a solution to Problem \ref{Prob:1.2}.

\begin{thm}\label{5.1}
Let $G$ be a connected balanced bipartite graph of order $2n$, where $n\geq 2k+2$
and $\delta(G)\geq k\geq 1$. If $W(G)\leq W(B^k_n)$ or $H(G)\geq H(B^k_n)$, then
$G$ is Hamiltonian unless $G=B_n^k$.
\end{thm}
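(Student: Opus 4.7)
The plan is to follow the same template used for the Hamiltonian and traceability theorems of Section 4, now with Fact \ref{Fact:5.1} and Lemma \ref{Lem:5.1} in place of Fact \ref{Fact:3.1} and the Erd\H{o}s-type lemmas. The key initial observation is that $B_n^k$ is distance-extremal for Fact \ref{Fact:5.1}: a direct inspection shows that every cross-partition non-edge of $B_n^k$ lies at distance exactly $3$, and every same-partition pair lies at distance exactly $2$. Hence both inequalities in Fact \ref{Fact:5.1} collapse to equalities for $B_n^k$, yielding
\[
W(B_n^k) = e(B_n^k) + 3(n^2 - e(B_n^k)) + 4\binom{n}{2}, \qquad H(B_n^k) = e(B_n^k) + \tfrac{1}{3}(n^2 - e(B_n^k)) + \binom{n}{2}.
\]

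Next, I would convert each index hypothesis into an edge-count lower bound on $G$. Assuming $W(G) \leq W(B_n^k)$ and applying Fact \ref{Fact:5.1}(i) to $G$, the chain $e(G) + 3(n^2 - e(G)) + 4\binom{n}{2} \leq W(G) \leq W(B_n^k)$ combined with the identity above collapses to $e(G) \geq e(B_n^k) = n(n-k) + k^2$; the Harary case is completely parallel using Fact \ref{Fact:5.1}(ii). To feed this into Lemma \ref{Lem:5.1} I need the strict inequality $e(G) > n(n-k-1) + (k+1)^2$. A short computation gives
\[
e(B_n^k) - \bigl[n(n-k-1) + (k+1)^2\bigr] = n - 2k - 1,
\]
which is at least $1$ precisely under the hypothesis $n \geq 2k+2$. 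Hence the condition of Lemma \ref{Lem:5.1} is met, and that lemma forces $G$ to be either Hamiltonian or a subgraph of $B_n^k$.

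To rule out proper containment, note that $G$ and $B_n^k$ share the same vertex set of size $2n$, so $G \subsetneqq B_n^k$ would force $e(G) < e(B_n^k)$, contradicting the lower bound just obtained. Therefore $G = B_n^k$, which is the excluded case, completing the proof.

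I do not foresee any real obstacle: the argument is the bipartite analogue of the proof for $N_n^k$ in Section 4. The one point that deserves careful checking is the arithmetic showing that $n \geq 2k+2$ is precisely the threshold at which $e(B_n^k)$ strictly exceeds the Erd\H{o}s--Moon--Moser-type bound in Lemma \ref{Lem:5.1}; everything else is bookkeeping once one notices that $B_n^k$ saturates the distance inequalities of Fact \ref{Fact:5.1}, which is exactly what makes the two extremal identities above tight and forces uniqueness of the extremal graph.
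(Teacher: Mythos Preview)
Your proposal is correct and follows essentially the same route as the paper's proof: both derive $e(G)\ge e(B_n^k)$ from Fact~\ref{Fact:5.1}, check that this exceeds the threshold of Lemma~\ref{Lem:5.1} precisely when $n\ge 2k+2$, and then exclude $G\subsetneqq B_n^k$. Your elimination of the proper-subgraph case via the edge inequality $e(G)<e(B_n^k)$ is in fact slightly cleaner than the paper's, which argues $W(G)>W(B_n^k)$ directly and only treats the Wiener case explicitly.
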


\begin{proof}
By Fact \ref{Fact:5.1}, we have $W(B^k_n)=5n^2-2n-2e(B^k_n)$ and $H(B^k_n)=
e(B^k_n)+3(n^2-e(B^k_n))+\binom{n}{2}$. If $W(G)\leq W(B^k_n)$, then
$e(G)\geq\frac{1}{2}(5n^2-2n-W(G))\geq\frac{1}{2}(5n^2-2n-W(B^k_n))=e(B^k_n)$.
If $H(G)\geq H(B^k_n)$, then $e(G)\geq e(B^k_n)$. When $n\geq 2k+2$,
$e(B^k_n)=n(n-k)+k^2>n(n-k-1)+(k+1)^2$. By Lemma~\ref{Lem:5.1}, $G$ is
Hamiltonian unless $G\subseteq B_n^k$. If $G\varsubsetneq B_n^k$, then
$W(G)>W(B_n^k)$, a contradiction. This completes the proof.
\end{proof}

Since the bound in Theorem~\ref{5.1} is tight, some previous work (see Theorem 2.2
in \cite{Z2013}) in this direction is a direct corollary.

\section{Concluding remarks}
One may ask to study the traceability of connected bipartite graphs with given
minimum degree. We know such a graph should be balanced or nearly-balanced.
Recently, Li and Ning \cite{LN2016-arxiv} studied spectral conditions for
traceability of bipartite graphs with given minimum degree. The study of
traceability of bipartite graphs in terms of Wiener index and Harary index
is very similar to the ones in \cite{LN2016-arxiv}. Some structural theorems
developed in \cite{LN2016-arxiv} about traceability of connected balanced
and nearly-balanced bipartite graphs will play the central roles in proofs
of these results. We omit the details and refer them to the interested reader.

\section{Acknowledgment}
H.-B. Hua was supported by the National Natural Science Foundation of China, No.\ 11571135.
B. Ning was supported by the National Natural Science Foundation of China, No.\ 11601379.

After finishing the original version of this paper, the second author has
been told that Dr. Liu et al. have proved the corrected forms of Hua-Wang's
theorem and Yang's theorem independently, when he was visiting East China
University of Science and Technology on Aug. 19th-25th, 2016. The second
author thanks to Dr. Liu for giving him the preprints of \cite{LDJ2016,LDJ2017}.

\end{document}